\topskip \setlength{\parindent}{0pt} \setlength{\parskip}{4pt plus
\numberwithin{equation}{section}
\newtheorem{theorem}{Theorem}[section]
\newtheorem{lemma}[theorem]{Lemma}
\newtheorem{proposition}[theorem]{Proposition}
\newtheorem{corollary}[theorem]{Corollary}
\newtheorem{definition}[theorem]{Definition}
\newtheorem{example}[theorem]{Example}
\numberwithin{equation}{section}
\begin{document}
\title[partitions of multisets]{The Twelvefold Way, the non-intersecting circles problem, and partitions of multisets}
\author{Toufik Mansour}
\address{Department of Mathematics, University of Haifa, 3498838 Haifa, Israel}
\email{tmansour@univ.haifa.ac.il}
\author{Madjid Mirzavaziri}
\address{Department of Mathematics, Ferdowsi university of Mashhad, Mashhad 91775, Iran}
\email{mirzavaziri@um.ac.ir}
\subjclass[2010]{05A18}
\author{Daniel Yaqubi}
\address{Department of Pure Mathematics, Ferdowsi University of Mashhad, Mashhad 91775, Iran}
\email{daniel\_yaqubi@yahoo.es}

\keywords{Multiset; Partitions and distinct partitions; $q$-series; The twelvefold way;
 Wilf partitions; 
 The non-intersecting circles problem; Rooted trees.}
 
\subjclass[2010]{05A18.}
\begin{abstract}
Let $n$ be a non-negative integer and $\mathbb{A}=\{a_1,\ldots,a_k\}$ be a multiset with $k$ positive integers such that $a_1\leqslant\cdots\leqslant a_k$. In this paper, we give a recursive formula for partitions and distinct partitions of positive integer $n$ with respect to a multiset $\mathbb{A}$. We also consider the extension of the \textit{Twelvefold Way}. By using  this notion, we solve the non-intersecting circles problem which asks to evaluate the number of ways to draw $n$ non-intersecting circles in the plane regardless of their sizes. The latter also enumerates the number of unlabeled rooted tree with $n+1$ vertices.
\end{abstract}
\maketitle

\section{Introduction}
A \textit{partition} of $n$ is a sequence $\lambda_1\geqslant \lambda_2\geqslant \cdots\geqslant \lambda_k$ of positive integers such that $\lambda_1+\lambda_2+\cdots+\lambda_k=n$ (see \cite{And76}). We write $\lambda\vdash n$ to denote that $\lambda$ is a partition of $n$. The non-zero integers $\lambda_k$ in $\lambda$ are called \textit{parts} of $\lambda$. The number of parts of $\lambda$ is the \textit{length} of $\lambda$, denoted by $\ell(\lambda)$, and $\vert\lambda\vert=\sum_{k\geqslant1}\lambda_k$ is the \textit{weight} of $\lambda$. More generally, any weakly decreasing sequence of positive integers is called
a partition. The partition whose parts are
$\lambda_1\geqslant \lambda_2\geqslant \cdots\geqslant \lambda_k$
is usually denoted by $\lambda=(\lambda_1,\lambda_2,\cdots,\lambda_k)$.
Let $P(n)$ denote the set of all partitions of $n$. The size of the set $P(n)$ is denoted by
the \textit{partition function $p(n)$}; that is  $p(n)=\vert P(n)\vert$.
In particular, $p(0)$ consist of a single element,
the unique empty partition of zero, which we denote by $0$.
For example $P(4)$ consists of five elements: $4, 3+1, 2+2, 2+1+1, 1+1+1+1.$ Hence $p(4)=5$.

We let $\mathbb{S}$ be a set of natural numbers and $p(n\lvert \mathbb{S})$
denotes the number of partitions of $n$ into elements
of $\mathbb{S}$ (that is, the parts of the partitions belong to $\mathbb{S}$)
and $p_{\ell}(n\lvert\mathbb{S})$ is the number of partitions of $n$ into exactly $\ell$ parts in $\mathbb{S}$. When $\mathbb{S}=\mathbb{N}$, the set of natural number, we denoted $p_{\ell}(n\lvert\mathbb{N})$ by $p_{\ell}(n)$, that is the number of partitions of $n$
into exactly $\ell$ parts (\textit{or dually, partitions with the largest part equal to $\ell$}).

Recall also that a \textit{multiset} $\mathbb{A}$ with the \textit{multiplicity mapping} $\theta$ is a collection of some not necessarily different objects such that for each
$a\in \mathbb{A}$ the number $\theta(a)$ is the multiplicity of the occurrence of $a$ in $\mathbb{A}$. If $\mathbb{A}$ is a multiset, we denote the set of members of $\mathbb{A}$ by $S(\mathbb{A})$ and we call it \textit{the background set of} $\mathbb{A}$. For a number $a_0$ and a multiset $\mathbb{A}$, the multiset $\{a_0a: a\in \mathbb{A}\}$ is denoted by $a_0\mathbb{A}$. We denote the multiset $\{1,1,\ldots,1\}$ with $\theta(1)=k$ by $I_k$.
We define that $I_0=\emptyset$. Thus, a multiset $\mathbb{A}$ can be
written as $\cup_{i=1}^\ell b_iI_{\theta(b_i)}$, where the background
set $S(\mathbb{A})$ of $\mathbb{A}$ is $\{b_1,\cdots,b_\ell\}$.
For two multisets $\mathbb{A}$ with the multiplicity mapping $\theta_\mathbb{A}$ and $\mathbb{B}$ with the multiplicity mapping $\theta_\mathbb{B}$, we define the multiplicity
mapping $\theta_{\mathbb{A}\setminus \mathbb{B}}$ of $\mathbb{A}\setminus \mathbb{B}$
by $\theta_{\mathbb{A}\setminus \mathbb{B}}(a)=\theta_\mathbb{A}(a)-\theta_\mathbb{B}(a)$ if $\theta_\mathbb{A}(a)\geqslant \theta_\mathbb{B}(a)$
and $\theta_{\mathbb{A}\setminus \mathbb{B}}(a)=0$ if
$\theta_\mathbb{A}(a)<\theta_\mathbb{B}(a)$. Moreover, the multiplicity
mapping $\theta_{\mathbb{A}\cup \mathbb{B}}$ of
$\mathbb{A}\cup \mathbb{B}$ is defined by $\theta_{\mathbb{A}\cup \mathbb{B}}(a)=\theta_\mathbb{A}(a)+\theta_\mathbb{B}(a)$.
In the following $\theta(\mathbb{A})=\sum_{a\in \mathbb{A}}a$, for a multiset $\mathbb{A}$.

For any fixed complex number $|q|\leq1$, any complex number $a$, and any non-negative integer $n$, let
$$(a;q)_n:=\left\{\begin{array}{ll}
\prod_{k=0}^{n-1}(1-aq^k), &  n>0\\
1,                   & n=0.
\end{array}\right.$$
Accordingly, let
\[(a;q)_{\infty}=  \prod_{k=0}^{\infty}(1-aq^k)=\lim_{n\to\infty}(a;q)_n.\]
A \textit{$q$-series} is any series which involves expressions of the form $(a;q)_n$ and $(a;q)_{\infty}$.
The generating function for $p(n)$, discovered by \textit{Euler}, is given by
$\sum_{n=0}^{\infty}p(n)q^n=\frac{1}{(q;q)_{\infty}}$.

The organization of this paper is as follows. In the next section, we consider the number of partitions of $n$ into elements of the multiset $\mathbb{A}$ and the number of partitions of $n$ with distinct parts from the multiset $\mathbb{A}$. As consequence, we present a recursive formula for Wilf's unsolved problem in \cite{Wil2}. In Section 3, we present an extension of the \textit{twelvefold way} which is invented by Stanley, see \cite{Sta}. As consequences, we give a recurrence relation for $B_n$ the number of ways to draw $n$ non-intersecting circles in a plane
regardless to their sizes. In Section 5, we deal with the ordered and unordered factorizations of natural numbers. In Section 6, we present generating functions for our sequences. We end with Section 6, where we establish connections with M\"{o}bius and Euler's totient functions.

\section{Partitions and distinct partitions of positive integer $n$ with respect to a multiset}
Let $n$ be a non-negative integer and $\mathbb{A}=\{a_1,\ldots,a_k\}$ be a multiset with $k$
(not necessarily distinct) positive integers. We denote by $D(n\lvert\mathbb{A})$, the number
of ways to partition $n$ as $a_1x_1+\cdots+a_kx_k$, where $x_i$'s are positive integers
and $x_i\leqslant x_{i+1}$ whenever $a_i=a_{i+1}$. The number of ways to partition $n$
in the form $a_1x_1+\cdots+a_kx_k$, where $x_i$'s are non-negative integers and
$x_i\leqslant x_{i+1}$ whenever $a_i=a_{i+1}$, is also denoted by $D_0(n\lvert\mathbb{A})$.
The numbers $D(n\lvert\mathbb{A})$ and $D_0(n\lvert\mathbb{A})$
are called \textit{the natural partition number} and \textit{the arithmetic partition
number of $n$ with respect to} $\mathbb{A}$.

\begin{lemma}\label{lemma1}
Let $n$ be a non-negative integer and $\mathbb{A}$ be
 a multiset with the multiplicity mapping
$\theta$ and the background set
 $S(\mathbb{A})=\{b_1,\ldots,b_\ell\}$. Then
\[D(n\lvert\mathbb{A})=\sum_{\begin{subarray}{c}
n=b_1y_1+\cdots+b_\ell y_\ell\\ \theta(b_i)\leqslant y_i,~ i=1,
\cdots,\ell\end{subarray}}\prod_{j=1}^\ell p_{\theta(b_j)}(y_j).\]
\end{lemma}
\begin{proof}
Let $A=\{a_1,\ldots,a_k\}$, where $a_i$'s are not necessarily distinct members of $A$.
We can write $n=a_1x_1+\cdots+a_kx_k$ in the form $n=b_1(x_{11}+\cdots+
x_{1\theta(b_1)})+\cdots+b_{\ell}(x_{\ell1}+\cdots+x_{\ell \theta(b_\ell)})$.
Putting $y_i=x_{i1}+\cdots+x_{i\theta(b_i)}$ we have $n=b_1y_1+
\cdots+b_\ell y_\ell$, where $\theta(b_i)\leqslant y_i$ for
$i=1,\cdots,\ell$. Now, the number of ways to partition
$y_i$ in the form $x_{i1}+\cdots+x_{i\theta(b_i)}$
is $p_{\theta(b_i)}(y_i)$, where $1\leqslant x_{i1}\leqslant\cdots\leqslant x_{i\theta(b_i)}$ are positive integers.
\end{proof}
Let $p_{\leqslant m}(n)$ denote the number of partitions of positive integer $n$ into
at most $m$ parts, notice that $p_{\leqslant m}(n)$, is equal to
the number of partitions of positive integer $n$ into parts that are all $\leqslant m$
in view of conjugate partitions. Then
$p_{\leqslant m}(n)=p_0(n)+p_1(n)+\cdots+p_m(n)$.
So we can state the following result.
\begin{lemma}
Let $n$ be a non-negative integer and $\mathbb{A}$ be
 a multiset with the multiplicity mapping
$\theta$ and the background set
 $S(\mathbb{A})=\{b_1,\cdots,b_\ell\}$. Then
\[D_0(n\lvert\mathbb{A})=\sum_{\begin{subarray}{c}
n=b_1y_1+\cdots+b_\ell y_\ell\\ \theta(b_i)\leqslant y_i,~ i=1,
\cdots,\ell\end{subarray}}\prod_{j=1}^\ell p_{\leqslant \theta(b_j)}(y_j).\]
\end{lemma}
Notice, if $n$ is a positive integer and $\mathbb{A}$ is a multiset as $\mathbb{A}=\{1,1,\ldots,1\}$ with multiplicity function $\theta$ that $\theta(1)=\ell$, then
\begin{eqnarray}\label{PLP}
D(n\lvert\mathbb{A})=D(n\lvert\{1,1,\ldots,1\})=p_{\ell}(n).
\end{eqnarray}
That is the number of partitions of positive integer $n$ into exactly $\ell$ parts.
Furthermore, for each multiset $\mathbb{A}$,
$D_0(n\lvert\mathbb{A})=D(n+\theta(\mathbb{A})\lvert\mathbb{A})$,
where $\theta(\mathbb{A})=\sum_{a\in \mathbb{A}}a$.

\begin{proposition}\label{DD}
Let $n$ be a non-negative integer and $\mathbb{A}$ be a multiset with the multiplicity mapping $\theta$. Then for each $a\in \mathbb{A}$,
\begin{eqnarray*}
D(n\lvert\mathbb{A})=\sum_{\begin{subarray}{c}0
\leqslant \ell\leqslant \theta(a)\\ a\theta(a)\leqslant n\end{subarray}}
 D(n-a\theta(a)\lvert\mathbb{A}\setminus aI_\ell),
\end{eqnarray*}
where $D(0\lvert\emptyset)=1$.
\end{proposition}
\begin{proof} Let $\mathbb{A}=\{a_1,\ldots,a_k\}$.
We have $\theta(a)$ occurrence of $a$ in the equation
$n=a_1x_1+\cdots+a_kx_k$.
Let $x_{i+1},\ldots, x_{i+\theta(a)}$
have coefficients $a$ in the equation, $x_{i+1}=\cdots=x_{i+\ell}=1$ and $x_{i+\ell+1}>1$, where $\ell=0,1,\ldots,\theta(a)$. If we subtract $a\theta(a)$ from the both sides of $n=a_1x_1+\cdots+a_kx_k$, then we get
$n-a\theta(a)=a_1x_1+\cdots+a_{i}x_i+a_{i+\ell+1}x_{i+\ell+1}+\cdots+a_kx_k$.
The number of solutions of this equation is
$D(n-a\theta(a)\lvert\mathbb{A}\setminus aI_\ell)$, which completes the proof.
\end{proof}
\begin{example}
We evaluate $D(17\mid\{1,2,2,3\})$ and $D_0(17,\{1,2,2,3\})$.
 Using Proposition~\ref{DD} and Corollary~\ref{OTT} we can write
\begin{align*}
D(17\mid\{1,2,2,3\})&=D(14\mid\{1,2,2,3\})+D(14\mid\{1,2,2\})\\
&=D(11\mid\{1,2,2,3\})+D(11\mid\{1,2,2\})+9\\
&=D(8\mid\{1,2,2,3\})+D(8\mid\{1,2,2\})+6+9\\
&=1+2+6+9=18.
\end{align*}
Moreover,
\begin{align*}
D_0(17,\{1,2,2,3\})&=D(17+8,\{1,2,2,3\})\\
&=D(22,\{1,2,2,3\})+D(22,\{1,2,2\})\\
&=D(19,\{1,2,2,3\})+D(19,\{1,2,2\})+25\\
&=D(16,\{1,2,2,3\})+D(16,\{1,2,2\})+20+25\\
&=D(13,\{1,2,2,3\})+D(13,\{1,2,2\})+12+20+25\\
&=D(10,\{1,2,2,3\})+D(10,\{1,2,2\})+9+12+20+25\\
&=D(7,\{1,2,2,3\})+D(7,\{1,2,2\})+4+9+12+20+25\\
&=0+2+4+9+12+20+25=72.
\end{align*}
\end{example}

\begin{corollary}\label{OTT}
Let $n$ be a positive integer. Then
\begin{align*}
D(n\mid\{1,2\})&=\big\lfloor\frac{n-1}2\big\rfloor,\\
D(n\mid\{1,2,2\})&=\big\lfloor\frac{n-1}{4}\big\rfloor(\big\lfloor\frac{n+1}{2}
\big\rfloor-\big\lfloor\frac{n+3}{4}\big\rfloor),\\
D(n\mid\{1,1,2\})&=\big\lfloor{\frac32\lfloor \frac{n-1}{3}
\rfloor+\frac12}\big\rfloor\big(\big\lfloor\frac{n-1}2\big\rfloor-
\frac12{\big\lfloor\frac32\lfloor \frac{n+2}{3}\rfloor\big\rfloor}+\frac{1+(-1)^n}{2}\big).
\end{align*}
 \end{corollary}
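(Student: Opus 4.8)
The plan is to reduce each of the three instances of $D(n,A)$ to an elementary lattice-point count and then to translate the resulting expressions in floor functions into the stated forms by a residue-class analysis. For $D(n,\{1,2\})$ nothing needs to be reduced: a solution of $x_1+2x_2=n$ in positive integers is determined by $x_2$ alone, which must satisfy $1\leqslant x_2\leqslant\frac{n-1}{2}$, and since $x_2$ is an integer this gives exactly $\lfloor\frac{n-1}{2}\rfloor$ choices.

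For $D(n,\{1,2,2\})$ I would single out the two parts carrying coefficient $2$. Writing $n=x_1+2x_2+2x_3$ with $1\leqslant x_2\leqslant x_3$, the value $x_1=n-2x_2-2x_3$ is then forced, and the requirement $x_1\geqslant 1$ reads $x_2+x_3\leqslant\lfloor\frac{n-1}{2}\rfloor=:N$. Hence $D(n,\{1,2,2\})$ counts the pairs $(x_2,x_3)$ of positive integers with $x_2\leqslant x_3$ and $x_2+x_3\leqslant N$; summing over $x_2$, for which $x_3$ ranges over an interval of $N-2x_2+1$ integers, yields the closed form $\lfloor\tfrac{N}{2}\rfloor\bigl(N-\lfloor\tfrac{N}{2}\rfloor\bigr)$. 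It then remains only to verify the two identities $\lfloor\tfrac{N}{2}\rfloor=\lfloor\tfrac{n-1}{4}\rfloor$ and $N-\lfloor\tfrac{N}{2}\rfloor=\lfloor\tfrac{n+1}{2}\rfloor-\lfloor\tfrac{n+3}{4}\rfloor$, each of which is settled by running through the four residue classes of $n$ modulo $4$.

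For $D(n,\{1,1,2\})$ I would instead fix the part with coefficient $2$: with $x_3=j$ the remaining equation $x_1+x_2=n-2j$ together with $x_1\leqslant x_2$ has, by the special case $D(m,I_2)=\Pi(m,2)=\lfloor m/2\rfloor$ recorded in Proposition \ref{DD}, exactly $\lfloor\tfrac{n-2j}{2}\rfloor$ solutions, so $D(n,\{1,1,2\})=\sum_{1\leqslant j,\ 2j\leqslant n-2}\lfloor\tfrac{n-2j}{2}\rfloor$. Evaluating this sum separately for even and odd $n$ gives $D(n,\{1,1,2\})=\frac{n(n-2)}{8}$ when $n$ is even and $\frac{(n-1)(n-3)}{8}$ when $n$ is odd. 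The genuinely laborious step, and the one I expect to be the main obstacle, is to confirm that the product of nested-floor expressions in the statement equals this quadratic: the cleanest route is to expand both the two displayed factors and the target value over the six residue classes of $n$ modulo $6$ — the common period of all the floors $\lfloor\tfrac{n-1}{3}\rfloor$, $\lfloor\tfrac{n+2}{3}\rfloor$, $\lfloor\tfrac{n-1}{2}\rfloor$ and the sign $(-1)^n$ occurring there — and to match them case by case. An alternative to the direct sums, worth keeping in reserve, is to iterate the recursion of Proposition \ref{DD} in the coefficient $1$ (or in the coefficient $2$) and telescope, which reproduces the same floor sums.
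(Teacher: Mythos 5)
Your proposal is correct, but it takes a genuinely different route from the paper. The paper proves all three formulas by iterating the recursion of Proposition \ref{DD} (subtracting $am(a)$ at each step) and then summing the resulting telescoped series of floor terms, so the nested-floor expressions of the corollary emerge directly from those sums; the alternative you ``keep in reserve'' is in fact exactly the paper's method. You instead count lattice points directly: for $\{1,2\}$ by freeing $x_2$, for $\{1,2,2\}$ by counting pairs $x_2\leqslant x_3$ with $x_2+x_3\leqslant N=\lfloor\frac{n-1}{2}\rfloor$ to get $\lfloor\frac N2\rfloor\bigl(N-\lfloor\frac N2\rfloor\bigr)$, and for $\{1,1,2\}$ by conditioning on $x_3$ and using $\Pi(m,2)=\lfloor\frac m2\rfloor$ to get $\frac{n(n-2)}{8}$ (resp. $\frac{(n-1)(n-3)}{8}$) for even (resp. odd) $n$. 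All of these counts are right, and your reduction buys cleaner intermediate closed forms at the price of a final translation step: the mod $4$ identities for the second case and the mod $6$ comparison for the third. That comparison does succeed --- writing $n=6q+r$ and expanding, the paper's product equals, class by class, $3q\cdot\frac{3q\mp1}{2}$ or $(3q+2)\cdot\frac{3q+1}{2}$ or $(3q+2)\cdot\frac{3(q+1)}{2}$, matching your quadratics --- but be careful to carry it out as you describe, i.e.\ expressing every floor in each residue class exactly as a linear function of the quotient $q$ so that you are comparing polynomials in $q$, rather than merely testing six values of $n$; with that done, your argument is complete and arguably more transparent than the paper's telescoping computation.
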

\begin{proof}
Let $n=2k+r$, where $r=1,2$. By Proposition~\ref{DD}, we obtain
\begin{align*}
D(n\lvert\{1,2\})&=D(n-2\lvert\{1,2\})+D(n-2\lvert\{1\})=D(n-2\lvert\{1,2\})+1\\
&=D(n-4\lvert\{1,2\})+D(n-2\lvert\{1\})+1=D(n-4\lvert\{1,2\})+2\\
&=D(n-6\lvert\{1,2\})+3=\cdots=D(n-2k\lvert\{1,2\})+k=0+k=\lfloor\frac{n-1}2\rfloor.
\end{align*}
For the second assertion, let $n=4k+r$, where $r=1,2,3,4$. then
\begin{align*}
D(n\lvert\{1,2,2\})&=D(n-4\lvert\{1,2,2\})+D(n-4\lvert\{1,2\})+D(n-4\lvert\{1\})\\
&=D(n-4\lvert\{1,2,2\})+\lfloor\frac{n-5}2\rfloor+1\\
&=D(n-8\lvert\{1,2,2\})+\lfloor\frac{n-7}2\rfloor+\lfloor\frac{n-3}2\rfloor\\
&=\cdots=D(n-4k\lvert\{1,2,2\})+\sum_{i=1}^k\big \lfloor\frac{n-(4i-1)}2\big\rfloor\\
&=D(r\lvert\{1,2,2\})+\sum_{i=1}^k\big\lfloor\frac{n-(4i-1)}2\big\rfloor=0+\sum_{i=1}^k\big\lfloor\frac{n-(4i-1)}2\big\rfloor\\
&=k\lfloor\frac{n+1}2\rfloor-k(k+1)=\lfloor\frac{n-1}4\rfloor\big(\lfloor\frac{n+1}2\rfloor-\lfloor\frac{n+3}4\rfloor\big),
\end{align*}
as required.
Now, let $n=3k+r$ where $r=1, 2, 3$, then
\begin{eqnarray*}
D(n\mid\{1,1,2\})&=&D(n-2\mid\{1,1,2\})
+D(n-2\mid\{1,2\})+D(n-2\mid\{2\})\\
&=&D(n-2\mid\{1,1,2\})+\lfloor\frac{n-3}2\rfloor
+\frac{1+(-1)^n}{2}\\
&=&D(n-4\mid\{1,1,2\})+\lfloor\frac{n-5}2\rfloor+
\lfloor\frac{n-3}2\rfloor+2\frac{1+(-1)^n}{2}\\
&=&\ldots\\
&=&D(n-4(\lfloor\frac{3k+1}2\rfloor),\{1,1,2\})\\
&&+\sum_{i=1}^{\lfloor\frac{3k+1}2\rfloor}\lfloor\frac{(n-2i)-1}2\rfloor+
\lfloor\frac{3k+1}2\rfloor\frac{1+(-1)^n}{2}\\
&=&0 +\sum_{i=1}^{\lfloor\frac{3k+1}2\rfloor}\lfloor\frac{(n-2i)-1}
2\rfloor+k\frac{1+(-1)^n}{2}\\
&=&\lfloor\frac{3k+1}2\rfloor\lfloor\frac{n-1}2\rfloor
-\frac{\lfloor\frac{3k+1}2\rfloor(\lfloor\frac{3k+1}2\rfloor+1)}{2}
+\lfloor\frac{3k+1}2\rfloor\frac{1+(-1)^n}{2}.
\end{eqnarray*}
It is enough to note that $k=\lfloor\frac{n-1}3\rfloor$.
\end{proof}

Let $Q_m(n)$ be the number of partitions of a positive integer $n$ into exactly
$m$ distinct parts.
It is not difficult to verify by using  by using Ferrers diagrams that
$Q_m(n)=p_{\leqslant m}\left(n-{m+1\choose 2}\right)$,
which means, the number of
partitions of positive integer $n$ into exactly $m$ distinct parts
equals the number of partitions of $n-{m+1\choose 2}$ into at most $m$
parts (\textit{or dually, partitions into parts $\leqslant m$}), see \cite{Com}.
Then, the generating function of $Q_m(n)$ reads as
$$\sum_{n=0}^{\infty}Q_m(n)q^n=\frac{q^{{m+1\choose 2}}}{(q;q)_m}.$$
We let $Q(n)$ is the number of all partitions of $n$ into distinct parts.

Let $n$ be a non-negative integer and $\mathbb{A}=\{a_1,\ldots,a_k\}$ be a multiset of $k$
not necessarily distinct positive integers, where $a_1\leqslant\cdots\leqslant a_k$.
We denote by $\Delta(n\lvert\mathbb{A})$ the number of partitions of $n$ as
the form $a_1x_1+\cdots+a_kx_k$, where $x_i$'s are distinct positive integers and
$x_i< x_{i+1}$ whenever $a_i=a_{i+1}$. The number of
 partitions of $n$ of the form $a_1x_1+\cdots+a_kx_k$, where $x_i$'s are distinct
non-negative integers and $x_i< x_{i+1}$ whenever $a_i=a_{i+1}$, is also denoted by
$\Delta_0(n\lvert\mathbb{A})$. The numbers $\Delta(n\lvert\mathbb{A})$ and
$\Delta_0(n\lvert\mathbb{A})$ are called the \textit{natural distinct partition number}
and the \textit{arithmetic distinct partition number of $n$ with respect to} $\mathbb{A}$.

\begin{lemma}
Let $n$ be a non-negative integer and $\mathbb{A}$ be
 a multiset with the multiplicity mapping
$\theta$ and the background set
 $S(\mathbb{A})=\{b_1,\ldots,b_\ell\}$. Then
\[\Delta(n\lvert\mathbb{A})=\sum_{\begin{subarray}{c}
n=b_1y_1+\cdots+b_\ell y_\ell\\ \theta(b_i)\leqslant y_i,~ i=1,
\cdots,\ell\end{subarray}}\prod_{j=1}^\ell Q_{\theta(b_j)}(y_j).\]
\end{lemma}
\begin{proof}
Proof as similar  to Lemma \ref{lemma1}. Let $\mathbb{A}=\{a_1,\ldots,a_k\}$, where $a_i$'s are $k$ not necessarily distinct members of $A$.
We can write $n=a_1x_1+\ldots+a_kx_k$ as the form $n=b_1(x_{11}+\ldots+
x_{1\theta(b_1)})+\ldots+b_{\ell}(x_{\ell1}+\ldots+x_{\ell \theta(b_\ell)})$.
Putting $y_i=x_{i1}+\ldots+x_{i\theta(b_i)}$ we have $n=b_1y_1+
\ldots+b_\ell y_\ell$, where $\theta(b_i)\leqslant y_i$ for
$i=1,\ldots,\ell$. Now the number of ways to partition
$y_i$ into $x_{i1}+\ldots+x_{i\theta(b_i)}$
with $1\leqslant x_{i1}\leqslant\ldots\leqslant x_{i\theta(b_i)}$ is $Q_{\theta(b_i)}(y_i)$.
\end{proof}
Let $Q_{\leqslant m}(n)$ denote the number of partitions of positive
integer $n$ into at most $m$ distinct parts. Then
$Q_{\leqslant m}(n)=Q_1(n)+Q_2(n)+\cdots+Q_m(n)$, which leads to the following corollary.
\begin{corollary}
Let $n$ be a non-negative integer and $\mathbb{A}$ be
 a multiset with the multiplicity mapping
$\theta$ and the background set
 $S(\mathbb{A})=\{b_1,\ldots,b_\ell\}$. Then
\[\Delta_0(n\lvert\mathbb{A})=\sum_{\begin{subarray}{c}
n=b_1y_1+\cdots+b_\ell y_\ell\\ \theta(b_i)\leqslant y_i,~ i=1,
\cdots,\ell\end{subarray}}\prod_{j=1}^\ell Q_{\leqslant \theta(b_j)}(y_j).\]
\end{corollary}
\begin{corollary}\label{Dis}
 Let $n$ be a non-negative integer and $\mathbb{A}$ be
 a multiset with the multiplicity mapping
$\theta$ and the background set
 $S(\mathbb{A})=\{b_1,\ldots,b_\ell\}$. Then
 \begin{eqnarray*}
  \Delta \left( n\lvert\mathbb{A}\right)=D\left( n+\theta(\mathbb{A})
  -\sum_{i=1}^{\ell}b_i{\theta(b_i)+1\choose 2}\lvert\mathbb{A}\right).
 \end{eqnarray*}
\end{corollary}
\begin{proof}
 Let $n$ be a non-negative integer and $\mathbb{A}=\{a_1,\ldots,a_k\}$ be a multiset with $k$
not necessarily distinct positive integers, where $a_1\leqslant\cdots\leqslant a_k$.
$\Delta(n\lvert\mathbb{A})$ the number of partitions of $n$ as
the form $n=a_1x_1+\cdots+a_kx_k$, where $x_i$'s are distinct positive integers and
$x_i< x_{i+1}$ whenever $a_i=a_{i+1}$. We can write
\[n=b_1(x_{11}+\cdots+x_{1\theta(b_1)})+\cdots+b_{\ell}(x_{\ell1}+\cdots+x_{\ell \theta(b_\ell)}).\]
Putting $y_i=x_{i1}+\cdots+x_{i\theta(b_i)}$. The number of partitions
of $y_i$ into exactly $\theta(b_i)$ distinct parts equal $Q_{\theta(b_i)}(y_i)$ for
$i=1,\ldots,\ell$. By Corollary \eqref{Dis}, we get
$$Q_{\theta(b_i)}(y_i)=p_{\leqslant \theta(b_i)}\left( y_i+{\theta(b_i)+1\choose 2}\right).$$
Then, we can write
\begin{eqnarray*}
 n&=&b_1y_1+b_2y_2+\cdots+b_\ell y_\ell\\
 &=&b_1\left( y_1-{\theta(b_1)+1\choose 2}\big)+\cdots+b_\ell\big(y_\ell-{\theta(b_\ell)+1\choose 2}\right)\cr
 &=&b_1y_1+b_2y_2+\cdots+b_\ell y_\ell-\sum_{i=1}^{\ell}b_i{\theta(b_i)+1\choose 2}.
\end{eqnarray*}
Then, we can conclude
\[\Delta(n\lvert\mathbb{A})=D_0\left((n
  -\sum_{i=1}^{\ell}b_i{\theta(b_i)+1\choose 2}\lvert\mathbb{A}\right)
  =D\left( n+\theta(\mathbb{A})
  -\sum_{i=1}^{\ell}b_i{\theta(b_i)+1\choose 2}\lvert\mathbb{A}\right),\]
as claimed.
\end{proof}

It is easy to see that if $n$ is a positive integer and $\mathbb{A}$ be the multiset
$\{1,1,\ldots,1\}$, with multiplicity function $\theta(1)=\ell$, then
$\Delta(n\lvert\mathbb{A})=\Delta(n\lvert\{1,1,\ldots,1\})=Q_{\ell}(n)$.
Furthermore, for each multiset $\mathbb{A}$,
$\Delta_0(n\lvert\mathbb{A})=\Delta(n+\theta(\mathbb{A})\lvert\mathbb{A})$,
where, $\theta(\mathbb{A})=\sum_{a\in \mathbb{A}}a$.

 \textit{Herbert Wilf} posed some unsolved problems in \cite{ Wil2}.
 Wilf's Sixth Unsolved Problem regards
 \textit{``the set of partitions of positive integer $n$ for which
the (nonzero) multiplicities of its parts are all different''}.
We refer to these as \textit{Wilf partitions} and  $T(n)$ for the set of
Wilf partitions. For example there exist $4$ Wilf partitions of $n=4$ :
\[4=(1)4;\quad 2+2=(2)2 ;\quad 2+1+1=(1)2+(2)1;\quad 1+1+1+1=(4)1.\]
Then, $\vert T(4)\vert=4$.
Let $\mathbb{A}=\{a_1,a_2,\ldots,a_k\}$ be a set of non-negative integers. We denote $T(n\lvert\mathbb{A})$ for the
number of Wilf partitions of positive integers $n$ as the form
$a_1x_1+a_2x_2+\cdots+a_kx_k$, where $x_i$'s are positive distinct integers.
Furthermore, if we put $\mathbb{A}=\mathbb{N}$, the set of natural numbers,
then $T(n\lvert\mathbb{A})=|T(n)|$.
\begin{proposition}\label{delta}
Let $n$ be a non-negative integer and $\mathbb{A}=\{a_1,\ldots,a_k\}$
be a multiset with the background set
$S(\mathbb{A})=\{b_1,\ldots,b_\ell\}$. Then
$$\Delta(n\lvert\mathbb{A})=\Delta(n-\theta(\mathbb{A})\lvert\mathbb{A})
+\sum_{i=1}^\ell\Delta(n-\theta(\mathbb{A})\lvert\mathbb{A}\setminus\{b_i\}).$$
Moreover,
$\Delta(n\lvert\mathbb{A})=0$ when $n<\sum_{i=1}^k(k+1-i)a_i$.
\end{proposition}
\begin{proof}
At most one of $x_i$'s can be $1$. If there is no $x_i$ with $x_i=1$ then we can
write $n-\theta(A)=a_1(x_1-1)+\cdots+a_k(x_k-1)$ and there are
$\Delta(n-\theta(A),A)$ solutions for this equation under the required conditions.
Moreover, if $x_j=1$ for some $j$, then other $x_i$'s are greater that $1$ and thus
we can write
\[n-\theta(A)=a_1(x_1-1)+\cdots+a_{j-1}(x_{j-1}-1)+a_{j+1}(x_{j+1}-1)+\cdots+a_k(x_k-1).\]
There are $\Delta(n-\theta(A)\lvert\mathbb{A}\setminus\{b_i\})$ solutions for the latter equation,
where $b_i=a_j$.  The other parts are obvious.
\end{proof}
\begin{corollary}
Let $n$ be a non-negative integer and $\mathbb{A}=\{a_1,\ldots,a_k\}$
be a set of non-negative integers. Then $T(n\lvert \mathbb{A})$ given by
$T(n\lvert\mathbb{A})=\sum_{i=0}^k T(n-\theta(\mathbb{A})\lvert\mathbb{A}\setminus\{a_i\})$
with $b_0=\emptyset$.
\end{corollary}

\begin{example}
We evaluate $\Delta(18\lvert\{1,2,2,3\})$. By Proposition \ref{delta}, we have
\begin{eqnarray*}
\Delta(18\lvert\{1,2,2,3\})&=&\Delta(10\lvert\{1,2,2,3\})+\Delta(10\lvert\{2,2,3\})
+\Delta(10\lvert\{1,2,3\})+\Delta(10\lvert\{1,2,2\})\\
&=&0+0+\Delta(4\lvert\{1,2,3\})+\Delta(4\lvert\{2,3\})
+\Delta(4\lvert\{1,3\})+\Delta(4\lvert\{1,2\})\\&&
+\Delta(5\lvert\{1,2,2\})+\Delta(5\lvert\{2,2\})+\Delta(5\lvert\{1,2\})\\
&=&0+0+0+0+0+1+0+0+2=3.
\end{eqnarray*}
The 3 solutions are
\begin{eqnarray*}
18&=&1\times\mathbf{3}+2\times\mathbf{2}+2\times\mathbf{4}+3\times\mathbf{1}=1\times\mathbf{5}+2\times\mathbf{2}+2\times\mathbf{3}+3\times\mathbf{1}\\
&=&1\times\mathbf{4}+2\times\mathbf{1}+2\times\mathbf{3}+3\times\mathbf{2}.
\end{eqnarray*}
\end{example}
\begin{corollary}\label{coro}
Let $n$ be a positive integer. Then
$$\Delta(n\mid\{1,1\})=\lfloor\frac{n-1}2\rfloor \mbox{ and }
\Delta(n\mid\{1,2\})=\lfloor\frac{n-1}3\rfloor+\lfloor\frac{n-1}6\rfloor.$$
\end{corollary}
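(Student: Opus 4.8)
The plan is to run the recursion of Proposition~\ref{delta} and telescope it, exactly as in the telescoping computations of the preceding corollaries. Two base evaluations will be used repeatedly: $\Delta(m,\{1\})=1$ when $m\geqslant 1$ and $\Delta(m,\{1\})=0$ otherwise (the equation $m=x_1$ has the single solution $x_1=m$), and $\Delta(m,\{2\})=1$ when $m$ is an even integer with $m\geqslant 2$ and $\Delta(m,\{2\})=0$ otherwise (the equation $m=2x_1$ forces $x_1=m/2$). I will also invoke the vanishing bound of Proposition~\ref{delta}: its threshold $\sum_{i=1}^{k}(k+1-i)a_i$ equals $3$ for $A=\{1,1\}$ and $4$ for $A=\{1,2\}$, so $\Delta(m,\{1,1\})=0$ for $m<3$ and $\Delta(m,\{1,2\})=0$ for $m<4$.

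For the first identity I would note that $\sigma(\{1,1\})=2$ and $S(\{1,1\})=\{1\}$, so Proposition~\ref{delta} gives $\Delta(n,\{1,1\})=\Delta(n-2,\{1,1\})+\Delta(n-2,\{1\})$. Iterating this $\lfloor\frac{n-1}{2}\rfloor$ times drives the leading term to $\Delta(n-2\lfloor\frac{n-1}{2}\rfloor,\{1,1\})$, whose argument is $1$ or $2$ and hence vanishes, leaving $\sum_{j=1}^{\lfloor(n-1)/2\rfloor}\Delta(n-2j,\{1\})$; since $n-2j\geqslant 1$ throughout this range every summand is $1$, so $\Delta(n,\{1,1\})=\lfloor\frac{n-1}{2}\rfloor$. (One can also argue directly: $\Delta(n,\{1,1\})$ counts the pairs $0<x_1<x_2$ with $x_1+x_2=n$, i.e.\ the admissible values $x_1\in\{1,\ldots,\lceil n/2\rceil-1\}$.)

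For the second identity, $\sigma(\{1,2\})=3$ and $S(\{1,2\})=\{1,2\}$, so Proposition~\ref{delta} gives $\Delta(n,\{1,2\})=\Delta(n-3,\{1,2\})+\Delta(n-3,\{1\})+\Delta(n-3,\{2\})$. Writing $n=3q+r$ with $q=\lfloor\frac{n-1}{3}\rfloor$ and $r\in\{1,2,3\}$, I iterate $q$ times; the leading term becomes $\Delta(r,\{1,2\})=0$, leaving
\[\Delta(n,\{1,2\})=\sum_{j=1}^{q}\Delta(n-3j,\{1\})+\sum_{j=1}^{q}\Delta(n-3j,\{2\}).\]
The first sum equals $q=\lfloor\frac{n-1}{3}\rfloor$ because each $n-3j\geqslant r\geqslant 1$. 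The second sum is the number of even integers $\geqslant 2$ among $n-3,\ n-6,\ \ldots,\ n-3q$; since the common difference $3$ is odd the parities of consecutive terms alternate, so exactly one of every two consecutive terms is even, and the total depends only on the parity of the first term and of $q$. I would finish with a short case analysis on $n\bmod 6$, tracking whether the first even term lies in position $1$ or $2$ and whether $q$ is even or odd, and check that the count reduces to $\lfloor\frac{n-1}{6}\rfloor$; adding the two contributions then yields $\Delta(n,\{1,2\})=\lfloor\frac{n-1}{3}\rfloor+\lfloor\frac{n-1}{6}\rfloor$. This last step is the only real obstacle: it is exactly the floor-function bookkeeping (reconciling $\lfloor q/2\rfloor$ or $\lceil q/2\rceil$ with $\lfloor\frac{n-1}{6}\rfloor$ across the residue classes) that makes the other corollaries in this section so computational, and it is where I would be most careful to get the edge cases right.
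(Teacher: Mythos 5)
Your treatment of $\Delta(n,\{1,1\})$ is complete and is exactly the paper's telescoping argument, and your telescoping for $\Delta(n,\{1,2\})$ likewise reproduces the paper's reduction to $\Delta(n,\{1,2\})=\lfloor\frac{n-1}{3}\rfloor+\sum_{j=1}^{q}\Delta(n-3j,\{2\})$ with $q=\lfloor\frac{n-1}{3}\rfloor$. The problem is the step you defer: the ``short case analysis on $n\bmod 6$'' does not come out the way you (and the paper) assert. The second sum counts the indices $j\in\{1,\ldots,q\}$ with $n-3j$ even, i.e.\ with $j\equiv n\pmod 2$; this is $\lfloor q/2\rfloor$ for even $n$ and $\lceil q/2\rceil$ for odd $n$. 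Running the six residue classes, this equals $\lfloor\frac{n-1}{6}\rfloor$ in five of them but equals $\lfloor\frac{n-1}{6}\rfloor+1$ when $n\equiv 5\pmod 6$ (there $q=2m+1$ is odd, $n$ is odd, and $\lceil q/2\rceil=m+1$ while $\lfloor\frac{n-1}{6}\rfloor=m$). So the verification you promise cannot succeed, because the stated identity is itself false in that residue class: for $n=5$ one has $5=1\cdot 3+2\cdot 1=1\cdot 1+2\cdot 2$, so $\Delta(5,\{1,2\})=2$, while $\lfloor\frac{4}{3}\rfloor+\lfloor\frac{4}{6}\rfloor=1$; similarly $\Delta(11,\{1,2\})=5$ against the formula's $4$.

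A direct count makes the correct value transparent: $\Delta(n,\{1,2\})$ is the number of $x_2$ with $1\leqslant x_2\leqslant\lfloor\frac{n-1}{2}\rfloor$ minus one exactly when $x_1=x_2$ can occur, i.e.\ when $3\mid n$; hence $\Delta(n,\{1,2\})=\lfloor\frac{n-1}{2}\rfloor$ if $3\nmid n$ and $\lfloor\frac{n-1}{2}\rfloor-1$ if $3\mid n$, which agrees with $\lfloor\frac{n-1}{3}\rfloor+\lfloor\frac{n-1}{6}\rfloor$ except when $n\equiv 5\pmod 6$, where it is larger by $1$. Note that the paper's own proof has precisely the same lacuna you left: it works out only the case $3\mid n$ and dismisses the remaining residues with ``similarly,'' which is where the discrepancy hides. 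So your plan is the same as the paper's, but to make it honest you must either carry out the mod-$6$ bookkeeping (which exposes the exceptional class) and correct the statement accordingly, or replace the second identity by the closed form $\lfloor\frac{n-1}{2}\rfloor-1$ for $3\mid n$ and $\lfloor\frac{n-1}{2}\rfloor$ otherwise.
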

\begin{proof}
Let $n=2k+r$, where $r=1,2$. Using Proposition \ref{delta} we can write
\begin{align*}
\Delta(n\mid\{1,1\})&=\Delta(n-2\mid\{1,1\}) + \Delta(n-2\mid\{1\})
\\&=\Delta(n-2\mid\{1\}) + 1
\\&=\Delta(n-4\mid\{1,1\}) + \Delta(n-4\mid\{1\}) +1
\\&=\ldots
\\&=\Delta(n-2k\mid\{1,1\}) + k=0 + k=\lfloor\frac{n-1}2\rfloor.
\end{align*}
Now let $n=3k+r$, where $r=1,2,3$. Thus
\begin{align*}
\Delta(n\mid\{1,2\})&=\Delta(n-3\mid\{1,2\}) +
\Delta(n-3\mid\{1\}) +\Delta(n-3\mid\{2\})
\\&=\Delta(n-3\mid\{1,2\}) + \Delta(n-3\mid\{2\}) +1
\\&=\Delta(n-6\mid\{1,2\}) + \Delta(n-6\mid\{1\}) + \Delta(n-6\mid\{2\}) +
\Delta(n-3\mid\{2\})+1
\\&=\Delta(n-6\mid\{1,2\}) + \Delta(n-6\mid\{2\}) + \Delta(n-3\mid\{2\})+ 2
\\&=\ldots
\\&=\Delta(n-3k\mid\{1,2\}) + \sum_{i=1}^{\lfloor\frac{n-1}{3}\rfloor}\Delta(n-3i\mid\{2\})
+ k
\\&=0 + \sum_{i=1}^{\lfloor\frac{n-1}{3}\rfloor}\Delta(n-3i\mid\{2\}) +
\lfloor\frac{n-1}{3}\rfloor.
\end{align*}
If $n=3k$ then $k-i$ is even and so
\[\sum_{i=1}^{\lfloor\frac{n-1}{3}\rfloor}\Delta(n-3i\mid\{2\})
+\lfloor\frac{n-1}{3}\rfloor=\lfloor\frac{n-1}6\rfloor+\lfloor\frac{n-1}3\rfloor.\]
Similarly, we have the result for the cases $n=3k+1$ and $n=3k+2$.
\end{proof}
\section{The twelvefold way}
The \textit{Twelvefold Way} gives the number of mappings $f$ from the set $N$
of $n$ objects to set $K$ of the $k$ objects
(putting balls from the set $N$ into boxes in the set $K$). \textit{Richard Stanley} invented
the twelvefold way \cite{Sta}. Consider $n$ (un)labeled balls and $k$  (un)labeled cells.
There are four cases: $\mathbf{U}\rightarrow \mathbf{L}, \mathbf{L} \rightarrow\mathbf{ U},
\mathbf{L} \rightarrow\mathbf{L}, \mathbf{U} \rightarrow\mathbf{U}$,
for arrangements of $\mathbf{L}$abeled or $\mathbf{U}$nLabeled balls
$\hspace{0.1cm}\xrightarrow{in}\hspace{0.2cm}\mathbf{L}$abeled or $\mathbf{U}$nLabeled
boxes. Here Labeled means distinguishable and unLabeled means indistinguishable.
If we want to partition these balls into these
cells we are faced with the following  twelve problems (see Table \ref{Table:1}).
\begin{table}[htp]
 \centering
\begin{tabular}{ |c||c|c|c|c| }\hline
elements of $N$& elements of $K$& $f$ unrestricted&$f$ one-to-one&$f$ onto\\
\hline\hline
 $\mathbf{L}$&$\mathbf{L}$ &$k^n$&$(k-n+1)_n$&$k!{n\brace k}$\\
 \hline
$\mathbf{U}$& $\mathbf{L}$&${n+k-1\choose n}$ &${k\choose n}$&${n-1\choose n-k}$\\
\hline
$\mathbf{L}$ &$\mathbf{U}$& $\sum_{i=1}^k{n\brace i}$&$\delta_{k\leqslant n}$&${n\brace k}$\\
\hline
 $\mathbf{U}$&$\mathbf{U}$& $\sum_{i=1}^i p_i(n)$&$\delta_{k\leqslant n}$&$p_k(n)$\\
\hline
\end{tabular}
\caption{The Twelvefold Way}
\label{Table:1}
\end{table}
In Table \ref{Table:1}, $(k)_n:=k(k-1)\cdots(k-n+1)$ is the \textit{Pochhammer's symbol} or
\textit{falling factorial}, for $k,n\in \mathbb{N}$, ${n\brace k}$
denotes the {\em Stirling number of the second kind}, that is the number of partitions of the set $\{1,2,\ldots,n\}$ into exactly $k$ non-empty subsets, which is equal to $\sum_{i=1}^k(-1)^i{k\choose i}(k-i)^n$, the number ${n\brace k}$ satisfies the recursive relation
${n\brace k}={n-1\brace k-1}+k{n-1\brace k}$ and $\delta_{k\leqslant n}:=
\left\{
   \begin{array}{ll}
                1                   & {\rm when~} n\leqslant k,\\
                0                   & {\rm when~} n>k.
         \end{array}
         \right.$
Now, we consider a new problem as an extension and unification of the above problems.
Consider $b_1+b_2+\cdots+b_n$ balls with $b_1$ balls Labeled $1$, $b_2$ balls Labeled $2$,
and so on, $c_1+c_2+\cdots+c_k$ cells with $c_1$
cells Labeled $1$, $c_2$ cells Labeled $2$, and so on. We denote the situation of these
balls and cells by the two multisets $\mathcal{B}=\{b_1,b_2,\ldots,b_n\}$
of balls and $\mathcal{C}=\{c_1,\ldots,c_k\}$ of cells.
Let the number of mappings $\mathcal{F}$ from the multiset $\mathcal{B}$ of balls
to the multiset $\mathcal{C}$ of cells, be called \textit{The Mixed Twelvefold Way}
\textit{(or dually, the number of ways to partition the multiset $\mathcal{B}$ of balls into
the multiset $\mathcal{C}$ of cells)}.
We denote the number of \textit{unrestricted} mappings of
$\mathcal{F}$ by $\Gamma_0(\mathcal{B}\lvert\mathcal{C})$.
Also, we denote the number of \textit{onto} mappings of $\mathcal{F}$, that is,
\textit{ the number of ways
to partition the multiset $\mathcal{B}$ of balls into the multiset
 $\mathcal{C}$ of cells, such that the cells are nonempty}
by $\Gamma(\mathcal{B}\lvert\mathcal{C})$.

\begin{theorem}\label{pi0}
Let $\mathcal{B}=\{b_1,\ldots,b_n\}$ and $\mathcal{C}=\{c_1,\ldots,c_k\}$
be two multisets whose members are positive integers. The number of unrestricted
mappings $\mathcal{F}$ from the
multiset $\mathcal{B}$ to $\mathcal{C}$ is given by
\begin{eqnarray*}
\Gamma_0(\mathcal{B}\lvert\mathcal{C})=\sum_{\substack {b_1
=n_1+\cdots+n_k\\ 0\leqslant n_i\leqslant b_1}}
\sum_{\begin{subarray}{c}(C_1,\cdots,C_k)\\ \theta(C_i)\leqslant c_i\end{subarray}}
\big(\prod_{j=1}^k\Delta(n_j\lvert C_j)\big)
\Gamma_0\big(\mathcal{B}\setminus\{b_1\}\lvert
(\bigcup_{i=1}^kC_i)\cup(\bigcup_{\substack{i=1\\
\theta(C_i)<c_i}}^k\{c_i-\theta(C_i)\}\big),
\end{eqnarray*}
where $\Gamma_0(\emptyset,A)=1$ for each multiset $A$ of nonnegative integers.
\end{theorem}
\begin{proof}
At first, we distribute the $b_1$ balls Labeled $1$ into cells.
Let $n_i$ be the number of balls in cells Labeled $i$ for $i=1,\ldots,k$.
Thus we can write $b_1=n_1+\cdots+n_k$. When we put $n_i$ balls in cell
Labeled $i$, the $c_i$ cells Labeled $i$ partitioned into different types.
Suppose that we have $\ell_{ij}$ cells Labeled $i$ with $x_{ij}$ balls
Labeled $1$. Whence $c_i=\ell_{i1}x_{i1}+\cdots+\ell_{it}x_{it}+r_i$,
where $r_i$ is the number of cells Labeled $i$ which are still empty.
Let $C_{i}=\{\ell_{i1},\ldots,\ell_{it}\}$. Thus $\theta(C_i)\leqslant c_i$
and there are $\Delta(n_i\lvert C_i)$ situations in which the types of the
$c_i$ cells Labeled $i$ change into $\ell_{i1}$ cells of the first type,
say Labeled $\ell_{i1}$, $\ldots$, $\ell_{it}$ cells of the $t$-th type, say
Labeled $it$, and $r_i$ empty cells the $t+1$-st type, say Labeled $i(t+1)$.
We can therefore say that after distributing the $b_1$ balls Labeled
$1$ into cells we have the multiset $\mathcal{B}\setminus\{b_1\}$ of
balls and the multiset
$$(\bigcup_{i=1}^kC_i)\cup(\bigcup_{\substack{i=1\\
\theta(C_i)<c_i}}^k\{c_i-\theta(C_i)\}\big),$$
of cells.
The number of ways putting of these balls into these cells is
$$\Gamma_0\big(\mathcal{B}\setminus\{b_1\}\lvert
(\bigcup_{i=1}^kC_i)\cup(\bigcup_{\substack{i=1\\
\theta(C_i)<c_i}}^k\{c_i-\theta(C_i)\}\big),$$
which completes the proof.
\end{proof}
\begin{theorem}\label{pi}
Let $\mathcal{B}=\{b_1,\ldots,b_n\}$ and
$\mathcal{C}=\{c_1,\ldots,c_k\}$
be two mulitsets whose members are positive integers.
The number of onto
mappings of $\mathcal{F}$ from the
multiset $\mathcal{B}$ to $\mathcal{C}$ is given by
\begin{align*}
\Gamma(\mathcal{B}\lvert\mathcal{C})=
\sum_{\ell=0}^k\sum_{1\leqslant i_1<\cdots<i_\ell\leqslant k}(-1)^\ell
\Gamma_0\big(\mathcal{B}\lvert\bigcup_{i=1}^\ell
\big((\mathcal{C}\setminus\{c_{i_j}\})\cup(\{c_{i_j}-1\})\big)\big).
\end{align*}
\end{theorem}
\begin{proof}
Let ${\mathcal E}_i$ be the set of situations in which some of
cells Labeled $i$ is empty. Then the number of the elements of
${\mathcal E}_{i_1}\cap\cdots\cap{\mathcal E}_{i_\ell}$ is $\Gamma_0\big(\mathcal{B}\lvert
\bigcup_{i=1}^\ell\big((\mathcal{C}\setminus\{c_{i_j}\})\cup(\{c_{i_j}-1\})\big)\big)$.
Now the inclusion exclusion principle implies the result.
\end{proof}
Let $n$ and $k$ be positive integers. Consider $\mathcal{B}=\{1,2,\ldots,n\}$, the set of $n$ unLabeled balls and
$\mathcal{C}=\{1,2,\ldots,k\}$ the set of $k$ unLabeled cells, also,
$\mathcal{I}_k=\{1,1,\ldots,1\}$ be a multiset with multiplicity mapping $m$,
such that $\theta(1)=k$. Then, we conclude the following result about,
the number of unrestricted or onto mappings of $\mathcal{F}$, from the set
$\mathcal{B}$ or $\mathcal{I}_k$
to the set $\mathcal{C}$ or $\mathcal{I}_k$.
Then
\begin{itemize}
\item [i)] {$\Gamma(\mathcal{B}\lvert\mathcal{C})=p_k(n)$ and
$\Gamma_0(\mathcal{B}\lvert\mathcal{C})=p_k(n+k)$.}
\item [ii)] {$\Gamma(\mathcal{B}\lvert\mathcal{I}_k)={n-1\choose k-1}$ and
$\Gamma_0(\mathcal{B}\lvert\mathcal{I}_k)={n+k-1\choose k-1}$.}
\item [iii)] {$\Gamma(\mathcal{I}_n\lvert\mathcal{C})={n\brace k}$ and
$\Gamma_0(\mathcal{I}_n\lvert\mathcal{C})=\sum_{i=1}^k{n\brace i}$.}
\item [iv)] {$\Gamma(\mathcal{I}_n\lvert\mathcal{I}_k)=k!{n\brace k}$ and
$\Gamma_0(\mathcal{I}_n\lvert\mathcal{I}_k)=k^n$.}
\end{itemize}
\begin{corollary}
Let $n$ and $k$ be positive integers. Then
$p_k(n)=\sum_{\theta(\mathcal{C})=k}\Delta(n\lvert\mathcal{C})$,
where the summation is taken over all multisets $\mathcal{C}$
whose members are positive integers.
\end{corollary}
\begin{proof}
Using Theorems \ref{pi0} and \ref{pi}, we can write
\begin{eqnarray*}
p_k(n)&=&\Gamma(\{1,2,\ldots,n\}\lvert\{1,2,\ldots,k\})\\
&=&\Gamma_0(\{1,2,\ldots,n\}\lvert\{1,2,\ldots,k\})
-\Gamma_0(\{1,2,\ldots,n\}\lvert\{1,2,\ldots,k-1\})\\
&=&\sum_{\theta(\mathcal{C})\leqslant k}\Delta(n\lvert\mathcal{C})
-\sum_{\theta(\mathcal{C})\leqslant k-1}\Delta(n\lvert\mathcal{C})=\sum_{\theta(\mathcal{C})=k}\Delta(n\lvert\mathcal{C}),
\end{eqnarray*}
as claimed.
\end{proof}
\section{the non-intersecting circles problem}
To solve the non-intersecting circles problem, let us assume the following notations.
Let $n$ be a positive integer. We denote the set of all multisets $\mathbb{A}=\{a_1,\ldots,a_k\}$
such that there are distinct positive integers $x_1,\ldots,x_k$ with $n=a_1x_1+\cdots+a_kx_k$,
where $x_i<x_{i+1}$ whenever $a_i=a_{i+1}$, by ${\mathcal A}_{n,k}$. Recall that for an
$\mathbb{A}\in{\mathcal A}_{n,k}$ there are $\Delta(n\lvert\mathbb{A})$ solutions $(x_1,\ldots,x_k)$
satisfying the above condition. We denote the set of these $(x_1,\ldots,x_k)$ by ${\mathcal X}_\mathbb{A}$.

Note that the number of $(n_1,\ldots,n_r)$ with
$1\leqslant n_1\leqslant \cdots\leqslant n_r\leqslant s$ is given by
\begin{eqnarray}\label{parts}
\sum_{i=1}^k{r-1\choose i-1}{s\choose i}=\sum_{i=1}^k{r-1\choose r-i}{s\choose i}={r+s-1\choose r}.
\end{eqnarray}

The non-intersecting circles problem asks to evaluate the number of ways to draw $n$ non-intersecting
circles in a plane regardless to their sizes. For example, if we use the symbol $(~)$
for a circle then there are four such ways for 3,
circles
$(~)(~)(~),((~)(~)),((~))(~),(((~)))$
and nine ways for 4 circles,
$$(~)(~)(~)(~),
((~)(~)(~)),
((~)(~))(~),
(((~)(~))),((~))(~)(~),
(((~))(~)),
(((~)))(~),
((((~)))),((~))((~)).$$
If we denote this number by $B_n$ then we can see that $B_0=B_1=1, B_2=2, B_3=4, B_4=9$, $B_5=20$ and so on.
\begin{theorem}
Let $B_n$ be the number of ways to draw $n$ non-intersecting circles in a plane
regardless to their sizes. Then
\[B_n=\sum_{k=1}^{\lfloor\sqrt{2n}\rfloor}\sum_{\mathbb{A}=
\{a_1,\cdots,a_k\}\in{\mathcal A}_{n,k}}
\sum_{(x_1,\cdots,x_k)\in {\mathcal X}_{A}}\prod_{i=1}^k {B_{x_i-1}+a_i-1\choose a_i}.\]
\end{theorem}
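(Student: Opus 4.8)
The plan is to prove the formula by a direct bijective decomposition of circle configurations, built on the standard dictionary between such configurations and rooted forests. First I would record the basic translation: a configuration of non-intersecting circles is the forest in which each circle is a node whose parent is the smallest circle strictly containing it, the outermost circles being the roots; since circles carry no labels and no preferred order, a configuration of $m$ circles is exactly an unordered collection of unlabelled rooted trees with $m$ nodes in total. In particular, a configuration consisting of a single outer circle enclosing an arbitrary configuration of $j$ circles is a rooted tree on $j+1$ nodes, and there are precisely $C_j$ of these (with $C_0=1$ for a lone circle); hence the number of unlabelled rooted trees on $x$ nodes is $C_{x-1}$.

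Next I would decompose an arbitrary configuration $F$ of $n$ circles along its outermost circles $T_1,\ldots,T_m$, where $T_j$ denotes the $j$-th outer circle together with everything nested inside it; writing $s_j$ for the number of circles in $T_j$ we have $s_1+\cdots+s_m=n$. Collecting the distinct values among $s_1,\ldots,s_m$ as $x_1,\ldots,x_k$ and letting $a_i$ be the number of $j$ with $s_j=x_i$, this yields $n=a_1x_1+\cdots+a_kx_k$ with the $x_i$ distinct positive integers and every $a_i\geqslant 1$. Sorting so that $a_1\leqslant\cdots\leqslant a_k$ and, within each maximal block of indices on which the $a_i$ are equal, listing the corresponding $x_i$ in increasing order, we obtain a multiset $A=\{a_1,\ldots,a_k\}\in{\mathcal A}_{n,k}$ together with a tuple $(x_1,\ldots,x_k)\in{\mathcal X}_A$ that is canonically determined by $F$; conversely every such pair $(A,(x_i))$ arises this way and determines the multiset of outer-circle sizes of $F$.

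Then I would count the configurations $F$ realising a prescribed pair $(A,(x_1,\ldots,x_k))$. Such an $F$ consists, for each $i$, of an unordered family of $a_i$ outer pieces, each of which is a rooted tree on $x_i$ nodes; by the first step there are $C_{x_i-1}$ isomorphism types available, so the number of admissible families for the index $i$ is the number of size-$a_i$ multisets from a $C_{x_i-1}$-element set, namely $\binom{C_{x_i-1}+a_i-1}{a_i}$, using the multiset count recalled in Remark~\ref{parts}. Since the $x_i$ are pairwise distinct, pieces attached to different indices have different sizes and hence never coincide, so these choices are independent and the number of $F$ realising $(A,(x_i))$ is $\prod_{i=1}^k\binom{C_{x_i-1}+a_i-1}{a_i}$. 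Summing over all pairs in $\bigcup_k\big({\mathcal A}_{n,k}\times{\mathcal X}_A\big)$ gives the stated identity, and the range of $k$ follows since $n=\sum_i a_ix_i\geqslant\sum_i x_i\geqslant 1+2+\cdots+k=\frac{k(k+1)}{2}$, whence $k\leqslant\lfloor\sqrt{2n}\rfloor$.

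I expect the main obstacle to be making the first two steps rigorous rather than the arithmetic: one has to argue carefully that the passage from a configuration to the pair $(A,(x_i))$ together with the list of outer pieces is a genuine bijection, i.e. that the sorting-and-ordering conventions match exactly those built into the definitions of ${\mathcal A}_{n,k}$ and ${\mathcal X}_A$ (so that nothing is over- or under-counted), and that invoking $C_{x-1}$ for the number of rooted trees on $x$ nodes is legitimate — concretely, that the right-hand side only ever refers to values $C_j$ with $j<n$ (the only way $x_i=n$ is the degenerate case $k=1$, $a_1=1$, which contributes $C_{n-1}$), so the identity genuinely determines $C_n$ recursively. The multiset enumeration via Remark~\ref{parts} and the bound on $k$ are then routine.
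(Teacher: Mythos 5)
Your proof is correct and follows essentially the same route as the paper's: decompose a configuration along its outermost circles, group equal part sizes to get the pair $(A,(x_1,\ldots,x_k))$, count the multiset of inner configurations of each size via Remark~\ref{parts} with $r=a_i$, $s=C_{x_i-1}$, and bound $k$ by $\frac{k(k+1)}2\leqslant n$. Your write-up is simply more explicit than the paper's about the rooted-forest dictionary, the sorting conventions making the correspondence a bijection, and the well-foundedness of the recursion.
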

\begin{proof}
Given $n$, let us draw our circles in $\ell$ parts with $y_i$ circles in $i$-th part. We can assume that $y_1\leqslant\cdots\leqslant y_\ell$. Thus $n=y_1+\cdots+y_\ell$. We can rewrite it as the form $n=a_1x_1+\cdots+a_kx_k$ such that $x_i<x_{i+1}$ whenever $a_i=a_{i+1}$. This shows that we have $a_i$ parts with $x_i$ circles of the form $(x_i-1)$ where $(~)$ denotes a circle containing $x_i-1$ circles. We can form the $a_i$ parts of the form $(x_i-1)$ in ${B_{x_i-1}+a_i-1\choose a_i}$ ways. The latter is true since we may put $r=a_i$ and $s=B_{x_i-1}$ in ~\ref{parts}. Note that a single form $(x_i-1)$ can be drawn in $B_{x_i-1}$ ways.

Now notice the fact that the maximum of $k$ occurs when $a_1=\cdots=a_k=1$. Since we have $1\leqslant x_1<\cdots<x_k$ in this case, we can therefore deduce that $\frac{k(k+1)}2\leqslant n$. Thus the maximum value of $k$ is $\lfloor\sqrt{2n}\rfloor$.
\end{proof}
\begin{example}
For $n=6$ we have
\begin{eqnarray*}
{\mathcal A}_{6,1}&=&\{\{\mathbf{1}\},\{\mathbf{2}\},\{\mathbf{3}\},\{\mathbf{6}\}\}\\
{\mathcal A}_{6,2}&=&\{\{\mathbf{1},\mathbf{1}\},\{\mathbf{1},\mathbf{2}\},\{\mathbf{1},\mathbf{3}\},\{\mathbf{1},\mathbf{4}\},\{\mathbf{2},\mathbf{2}\}\}\\
{\mathcal A}_{6,3}&=&\{\{\mathbf{1},\mathbf{1},\mathbf{1}\}\}
\end{eqnarray*}
We can therefore write
\begin{eqnarray*}
6&=&\mathbf{1}\times 6=\mathbf{2}\times 3=\mathbf{3}\times 2=\mathbf{6}\times 1=\mathbf{1}\times 1+\mathbf{1}\times 5=\mathbf{1}\times 2+\mathbf{1}\times 4\\
&=&\mathbf{1}\times 4+\mathbf{2}\times 1=\mathbf{1}\times 3+\mathbf{3}\times 2=\mathbf{1}\times 2+\mathbf{4}\times 1=\mathbf{2}\times 1+\mathbf{2}\times 2\\
&=&\mathbf{1}\times 1+\mathbf{1}\times 2+\mathbf{1}\times 3.
\end{eqnarray*}
Thus
\begin{eqnarray*}
B_6&=&{B_5\choose 1}+{B_2+1\choose 2}+{B_1+2\choose 3}+{B_0+5\choose 6}\\&&+{B_0\choose 1}{B_4\choose 1}+{B_1\choose 1}{B_3\choose 1}
+{B_3\choose 1}{B_0+1\choose 2}+{B_2\choose 1}{B_1+2\choose 3}\\&&+{B_1\choose 1}{B_0+3\choose 4}+{B_0+1\choose 2}{B_1+1\choose 2}+{B_0\choose 1}{B_1\choose 1}{B_2\choose 1}\\
&=&20+3+1+1+9+4+4+2+1+1+2=48.
\end{eqnarray*}
So, the number of ways to draw $6$ non-intersecting circles in a plane
regardless to their sizes is equal $48$.
\end{example}
A rooted tree may be defined as a free tree  in which some vertex has been distinguished as the \textit{root}. We can see some values of rooted tree for positive integer $n$ in \cite{In1}.
\begin{corollary}
Let $n$ be a positive integer. Then $B_n$ is the number of unlabeled rooted tree with $n+1$ vertices.
\end{corollary}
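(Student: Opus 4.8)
The plan is to argue by strong induction on $n$, showing that the sequence $(t_m)_{m\geqslant 1}$, where $t_m$ denotes the number of isomorphism classes of unlabelled rooted trees on $m$ vertices, satisfies $C_n=t_{n+1}$. The base case is $C_0=1=t_1$, since there is one drawing of zero circles and one rooted tree on a single vertex. For the inductive step I would first establish that $t_m$ obeys the \emph{same} recursion as the one proved for $C_n$ in the theorem above, and then feed in the inductive hypothesis.

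To get the recursion for $t_m$, recall that a rooted tree on $m\geqslant 2$ vertices is determined up to isomorphism by the multiset of rooted subtrees hanging from the root (delete the root; each component of the remainder contains a unique former neighbour of the root, at which it is re-rooted). Writing the distinct sizes occurring among these subtrees as $x_1<\cdots<x_k$, with respective multiplicities $a_1,\ldots,a_k$, we have $\sum_{i=1}^k a_ix_i=m-1$, so that $A:=\{a_1,\ldots,a_k\}\in{\mathcal A}_{m-1,k}$ and $(x_1,\ldots,x_k)\in{\mathcal X}_A$; conversely every rooted tree on $m$ vertices arises from exactly one triple $(k,A,(x_i))$ together with, for each $i$, a size-$a_i$ multiset of rooted trees on $x_i$ vertices, and the number of such multisets is $\binom{t_{x_i}+a_i-1}{a_i}$ by Remark~\ref{parts} (with $r=a_i$, $s=t_{x_i}$). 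Since $\tfrac{k(k+1)}2\leqslant\sum a_ix_i=m-1$ forces $k\leqslant\lfloor\sqrt{2(m-1)}\rfloor$, summing over the data yields
\[t_{m}=\sum_{k=1}^{\lfloor\sqrt{2(m-1)}\rfloor}\ \sum_{A=\{a_1,\ldots,a_k\}\in{\mathcal A}_{m-1,k}}\ \sum_{(x_1,\ldots,x_k)\in{\mathcal X}_A}\ \prod_{i=1}^k\binom{t_{x_i}+a_i-1}{a_i}.\]
Now I would set $m=n+1$ and invoke the inductive hypothesis: every index $x_i$ above satisfies $1\leqslant x_i\leqslant n$, so $x_i-1<n$ and therefore $t_{x_i}=C_{x_i-1}$; substituting this into the displayed formula turns it verbatim into the formula for $C_n$ from the theorem. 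Hence $C_n=t_{n+1}$.

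Conceptually, what makes the two recursions literally the same is the bijection one should keep in mind: to a drawing of $n$ non-intersecting circles associate a rooted tree on $n+1$ vertices by taking one vertex for the ambient plane (the root) together with one vertex per circle, and joining each circle to the smallest circle enclosing it, or to the root if none does. Non-intersection guarantees that the circles enclosing a given circle are linearly ordered by containment, so the parent is well defined; the inverse nests circles according to the parent relation and draws sibling sub-configurations disjointly side by side. The step I expect to require the most care is checking that this correspondence is compatible with the relevant equivalences: two drawings count as the same ``format'' precisely when an ambient homeomorphism of the plane carries one to the other, and such a homeomorphism may permute the sibling sub-configurations around any circle arbitrarily, which is exactly the freedom one has when passing to the \emph{multiset} of subtrees at a vertex. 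Once this is pinned down, the induction above is just the recursive unfolding of the bijection, and the corollary follows.
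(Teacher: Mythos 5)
Your argument is correct, but it takes a genuinely different route from the paper. The paper's proof is a one-line direct bijection: draw one circle per non-root vertex and nest circle inside circle exactly when the corresponding vertex is a child of the other, so that the root plays the role of the ambient plane; it never invokes the counting formula of the preceding theorem. You instead prove that the numbers $t_m$ of unlabelled rooted trees satisfy, via the standard decomposition of a rooted tree into the multiset of subtrees hanging at the root and Remark~\ref{parts}, literally the same recursion as the theorem's formula for $C_n$, and then conclude $C_n=t_{n+1}$ by strong induction (your bookkeeping is sound: the pairs $(a_i,x_i)$ you produce are exactly the data $A\in{\mathcal A}_{n,k}$, $(x_1,\ldots,x_k)\in{\mathcal X}_A$ of the theorem, each $x_i\leqslant n$ so the inductive hypothesis applies, and the base case $C_0=t_1=1$ is fine). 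What each approach buys: the paper's bijection is shorter, independent of the theorem, and in fact explains it, but it silently assumes that ``same format'' for circle drawings corresponds exactly to isomorphism of rooted trees --- the very point you flag as needing care; your recursion-matching argument sidesteps having to make that equivalence precise a second time, since all use of the circle model is already packaged inside the theorem, at the cost of redoing on the tree side the decomposition the paper performed on the circle side. The bijection you sketch at the end is essentially the paper's proof (stated in the inverse direction), so your write-up in effect contains both arguments.
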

\begin{proof}
There is a one to one correspondence between $n$ non-intersecting circles and an unlabeled rooted tree with $n+1$ vertices. It is enough to draw a circle for each non-root vertex and put a circle inside another one if the second one is the parent of the first one.
\end{proof}
\section{Ordered and unordered factorizations of natural numbers}
An \textit{ordered factorization} of a positive integer $n$ is a representation of $n$ as an ordered
product of integers, each factor greater than $1$.
For positive integer $\ell,k\geqslant1$ we denoted the number of ordered factorization of
positive integer $n$ into exactly $k$ factors, such that each factors
$\geqslant\ell$ by $\mathcal{H}(n;k,\ell)$.
 We use $\mathcal{H}(n)$ to represent the number of all
 ordered factorization of the positive integer $n$ \textit{(in analogy with compositions for sum)}.
 For example, $\mathcal{H}(12)=8$, since we have the factorizations $12,2\times6,6\times2,3\times4,
  4\times3,2\times2\times3,2\times3\times2$ and $3\times2\times2$. By the definition, $\mathcal{H}(1)=0$,
  but some situations it is useful to set $\mathcal{H}(1)=1$ or $\mathcal{H}(1)=\frac{1}{2}$, \cite{Hil}.\\
  Every integer $n>1$ has a canonical factorization into distinct prime numbers $p_1,p_2,\ldots,p_r$, namely
  \begin{eqnarray}\label{Pr}
  n=p^{\alpha_1}p^{\alpha}_2\ldots p^{\alpha_r};\quad\quad 1<p_1<p_2<\ldots<p_r.
  \end{eqnarray}
  Many problems involving factorisatio numerorum depend only on the set of exponents in
  \ref{Pr}, $\{\alpha_1,\alpha_2,\ldots,\alpha_r\}$.
  \textit{MacMahon} \cite{Mac93} developed the theory of compositions of \textit{multipartite numbers}
  from this perspective, and indeed considered these problems throughout his career
  \cite{Mac27}, but \textit{Andrews} suggests the more modern
  terminology \textit{vector compositions} \cite{And76}, p.$57$.
  A general formula for $\mathcal{H}(n,k,2)$ of ordered factorizations of
  positive integer $n$ such that each factor larger than $2$ given by MacMahan in \cite{Mac93}.
  Now, we give another proof for $\mathcal{H}(n,k,2)$ and $\mathcal{H}(n,k,1)$ with above results.

\begin{theorem}
Let $n=p_1^{\alpha_1}\ldots p_r^{\alpha_r}$ be a positive integer. Then,
the number of ordered factorizations of $n$ into $k$ factors such that
each factor $\geqslant1$ and $\alpha_1+\ldots+\alpha_n\geqslant k\geqslant 1$, is given by
\begin{eqnarray}
 \mathcal{H}(n,k,1)=\sum_{i=1}^{\alpha_1+\ldots+\alpha_r}\Gamma_0(\{\alpha_1,\ldots,\alpha_r\},I_i)
=\sum_{i=1}^{\alpha_1+\ldots+\alpha_r}\prod_{j=1}^n{\alpha_j+i-1\choose i-1}.
\end{eqnarray}
Also, the number of unordered factorizations of $n$ into $k$ factors such that
each factor $\geqslant 2$ is given by\\
\begin{eqnarray}
\mathcal{H}(n,k,2)&=&\sum_{i=1}^{\alpha_1+\ldots+\alpha_r}\Gamma(\{\alpha_1,\ldots,\alpha_r\},I_i)\cr
&=&\sum_{i=1}^{\alpha_1+\ldots+\alpha_r}
\sum_{\ell=0}^i(-1)^\ell{i\choose \ell}\prod_{j=1}^n{\alpha_j+i-\ell-1\choose i-\ell-1}.
\end{eqnarray}
\end{theorem}
\begin{proof}
It is sufficiently using of theorems \ref{pi} and theorem \ref{pi0}.
Suppose that for $1\leqslant j\leqslant n$ we have $\alpha_j$ balls
Labelled $p_j$ and we want to put these balls into $k$ different cells.
There is a one to one correspondence between these situations and unordered factorizations
of positive integer $n$ as the form $n=n_1\times n_2\times\ldots\times n_k$
such that each factor $\geqslant1$.
In fact we can consider $n_j$ as the product of the balls in cell $j$.
There are $\alpha_j+k-1\choose k-1$ ways to put balls Labelled $p_j$.
Thus the first part is obvious.

For the second part, let $E_r$ be the set of all situations in which the cell $r$ is empty,
where $1\leqslant r\leqslant k$. Then we have
\[|E_{r_1}\cap\ldots\cap E_{r_i}|=
\prod_{j=1}^{n}{\alpha_j+k-i-1\choose k-i-1},\quad 1\leqslant i\leqslant k-1.\]
Thus the principle of inclusion and exclusion implies the result.
\end{proof}
Let $\mathcal{F}(n;k,\ell)$ denote the number of unordered factorizations
 of a positive integer $n$ into exactly $k$ factors, such that every factors $\geqslant\ell$. Means,
 the number of ways can be written positive integer $n$ as
the a product $n=n_1\times n_2\times\ldots\times  n_{k}$,
 where $n_1\geqslant n_2 \geqslant \ldots \geqslant n_{k} \geqslant\ell$.
  We call
$\mathcal{F}(n)$ is \textit{the unordered Factorization function of $n$}
(in analogy with partitions function $p(n)$ for sum).
   For example $\mathcal{F}(12)$, corresponding to
    $2\times6, 2\times2\times3, 3\times4$ and $12$.
    The sequence $\mathcal{F}(n)$ is listed in \cite{In1}.\\
    Now, by using of the using of theorems \ref{pi} and \ref{pi0}, we conclude the
    following proposition.
    \begin{proposition}
Let $n=p_1^{\alpha_1}\ldots p_r^{\alpha_r}$ be a positive integer. Then,
the number of unordered factorizations of $n$ into $k$ factors such that
each factor $\geqslant1$ and $\alpha_1+\ldots+\alpha_n\geqslant k\geqslant 1$, is given by
 \[\mathcal{F}(n,k,1)=\sum_{i=1}^{\alpha_1+\ldots+\alpha_n}\Gamma_0(\{\alpha_1,\ldots,\alpha_n\},\{i\});\]
Also, the number of unordered factorizations of $n$ into $k$ factors such that
each factor greater $1$ is given by
\[\mathcal{F}(n,k,2)=\sum_{i=1}^{\alpha_1+\ldots+\alpha_n}\Gamma(\{\alpha_1,\ldots,\alpha_n\},\{i\}).\]
    \end{proposition}

\section{Generating function of $D(n\lvert A)$}
In this section, by using of generating function we obtained the values of $D(n|A)$ for special multiset.
\begin{theorem}\label{GENN}
Let $n$ be a non-negative integer and $\mathbb{A}=\{a_1,\ldots,a_k\}$ be
 a multiset with the multiplicity mapping
$\theta$ and the background set
 $S(\mathbb{A})=\{b_1,\ldots,b_\ell\}$ which $\theta(b_i)=m_i$. The generation function of $D(n\lvert A)$ is given by
\begin{align}
\sum_{n=0}^{\infty} D(n\lvert A)x^n=\prod_{i=1}^{\ell}  \prod_{j = 1}^{m_i} \frac{x^{b_i}}{1-x^{b_i(m_i - j +1)}}
\end{align}
\end{theorem}
\begin{proof}
For each $1\leq i\leq \ell$, we want a monotonically nondecreasing sequence $n_{i,1} \leq n_{i,2} \leq \cdots \leq n_{i,m_i}$. For $2 \leq j \leq m_i$, we make the change of variables as follow $d_{i,1}= n_{i,1}$ and $d_{i,j}=n_{i,j} - n_{i,j-1}$ for $j=2,3,\ldots,m_i$.
Then the monotonically nondecreasing condition on the $(n_{i,j})_j$ becomes $d_{i,1} \geq 1$ and $d_{i,j} \geq 0$ for $1 \leq j \leq m_i$.  Observe that
\begin{align*}
\sum_{j=1}^{m_i} n_{i,j} &= (d_{i,1}) + (d_{i,1} + d_{i,2}) + \cdots + (d_{i,1} + d_{i,2} + \cdots + d_{i,m_i})  \\
&= \sum_{j=1}^{m_i} (m_i - j +1) d_{i,j}
\end{align*}

Then $D(n|A)$ is the number of ways of choosing all these $d_{i,j}$ such that
 \begin{align*}
    n &= \sum_{i=1}^{\ell} b_i \sum_{j =1}^{m_i} n_{i,j}= \sum_{i \in I} b_i \sum_{j=1}^{m_i} (m_i - j +1) d_{i,j}  \\
      &= \sum_{i=1}^{\ell} \left( b_i m_i d_{i,1} + \sum_{j=2}^{m_i} b_i (m_i - j +1) d_{i,j} \right)  \text{,}
      \end{align*}
where
$d_{i,1}\geq 1$ ($1\leq i\leq \ell$) and $d_{i,j}\geq 0$ ($1\leq i\leq \ell$, $2 \leq j \leq m_i$). So, the generating function for $D(n|A)$ is
$$  \prod_{i=1}^{\ell} \left( \frac{x^{b_i m_i}}{1-x^{b_i m_i}} \prod_{j = 2}^{m_i} \frac{1}{1-x^{b_i(m_i - j +1)}} \right),  $$
as required.
\end{proof}
By \eqref{PLP}, we have the following corollary.
\begin{corollary}
Let $n$ be positive integer and $A=\{1,1,\cdots,1\}$ be a multiset which $\theta(1)=\ell$. Then
$\sum_{n=0}^{\infty} D(n\lvert A)x^n=\frac{x^{\ell}}{(x;x)_{\ell}}$.
\end{corollary}
\begin{proof}
We can rewrite the generating function of $D(n\lvert A)$ as simpler
\begin{align*}
\prod_{i \in I} \left( \frac{x^{b_i m_i}}{1-x^{b_i m_i}} \prod_{j = 2}^{m_i} \frac{1}{1-x^{b_i(m_i - j +1)}} \right)
&= \prod_{i \in I}  \frac{x^{b_i m_i}}{1-x^{b_i m_i}} \frac{1}{\prod_{j = 2}^{m_i}1-x^{b_i(m_i - j +1)}}  \\
&= \prod_{i \in I}  \frac{x^{b_i m_i}}{\prod_{j = 1}^{m_i}1-x^{b_i(m_i - j +1)}}  \\
&= \prod_{i \in I}  \prod_{j = 1}^{m_i} \frac{x^{b_i}}{1-x^{b_i(m_i - j +1)}}  \text{.}
\end{align*}
Consider multiset $A=\{1,1,\cdots,1\}$ such that $\theta(1)=\ell$. Put $m_i=\ell$ and $b_i=1$, then
\begin{align*}
\sum_{n=0}^{\infty} D(n\lvert A)x^n=\frac{x}{1-x^{\ell}}\cdot \frac{x}{1-x^{\ell-1}}\cdot \cdots \cdot \frac{x}{1-x},
\end{align*}
as claimed.
\end{proof}
Now, we obtain another generating function for $D(n\vert A)$ by using of hypergeometric series.

Let $n$ be a non negative integer and $A=\{a_1,a_2,\ldots,a_k\}$ be a multiset. Let $1\leq n_1 \leq n_2 \leq \cdots \leq n_k$ be a positive solution of the system  $n=a_1n_1+\ldots+a_kn_k,$  such that $n_i=n_{i-1}+s_i$ where $s_i$ is non negative integers for $1\leq i \leq k$. For $\vert q\vert<1$, we can write
\begin{eqnarray*}
   \sum_{n=0}^\infty  D(n,A) q^n&=&\sum_{\begin{subarray}{c}1 \leqslant n_1 \leqslant \ldots \leqslant n_k\end{subarray}}
   q^{a_1n_1+\ldots+a_kn_k}\\
   &=&\sum_{\begin{subarray}{c}1 \leqslant n_1 \leqslant \ldots \leqslant n_k\end{subarray}}
   (q^{a_1})^{n_1}(q^{a_2})^{n_2}\ldots (q^{a_k})^{n_k}\\
   &=&\sum_{\begin{subarray}{c}1 \leqslant n_1 \leqslant \ldots \leqslant n_{k-1}\end{subarray}}
   (q^{a_1})^{n_1}(q^{a_2})^{n_2}\ldots (q^{a_{k-1}})^{n_{k-1}}(q^{a_k})^{n_{k-1}+s_k}\\
   &=&\sum_{\begin{subarray}{c}1 \leqslant n_1 \leqslant \ldots \leqslant n_{k-1}\end{subarray}}
   (q^{a_1})^{n_1}(q^{a_2})^{n_2}\ldots (q^{a_{k-1}+a_k})^{n_{k-1}}\frac{q^{a_k}}{1-q^{a_k}}\\
   &=&\sum_{\begin{subarray}{c}1 \leqslant x_1 \leqslant \ldots \leqslant x_{k-2}\end{subarray}}
   (q^{a_1})^{n_1}(q^{a_2})^{n_2}\ldots (q^{a_{k-2}+a_{k-1}+a_k})^{n_{k-2}}
   \frac{q^{a_k+a_{k-1}}}{(1-q^{a_k+a_{k-1}})(1-q^{a_k})}\\
   &=&\ldots \\
   &=&\frac{q^{\ell}}{(1-q^{a_1+a_2+\ldots+a_k})(1-q^{a_2+\ldots+a_k})\ldots
   (1-q^{a_{k-1}+a_k})(1-q^{a_k})}.
      \end{eqnarray*}

 \begin{corollary}
 Let $n$ be a non-negative integer and $A=\{1,2,2,\cdots,2\}$ be a multiset with $\theta(A)=2\ell+1$. The generation function of $D(n\lvert A)$ is given by
$\sum_{n=0}^{\infty} D(n\lvert A)x^n=\frac{x}{1-x}E_{\ell }(n)$,
where $E_{\ell}(n)$ be the number of partitions of positive integer $n$ with even parts to at most $\ell$ parts.
\end{corollary}
\begin{corollary}
Let $n$ be positive integer and $A=\{1,1,\cdots,1,2,2,\cdots,2\}$ be a multiset with $\ell$-times one and $d$ times two. Then
$\sum_{n=0}^{\infty} D(n\lvert A)x^n=p_{\ell}(n)E_d(n)$.
\end{corollary}
\begin{example}
The generating function for multisets $\{1,1,2\}$, $\{1,3,3\}$ and $\{1,2,3\}$ are
\begin{eqnarray*}
\sum_{n=0}^{\infty} D(n\lvert\{1,1,2\}))x^n &=& x^2 + x^3 + 2 x^4 + 2 x^5 + 3 x^6 + 3 x^7 + 4 x^8 + 4 x^{9} + \cdots  \text{,}  \\
    \sum_{n=0}^{\infty} D(n\lvert\{1,3,3\}))x^n &=& x^7 + x^8 + x^9 + 2x^{10}+ 2 x^{11} + 2 x^{12} + 3x^{13} + 3 x^{14} + 3 x^{15} \\
      &+& 4 x^{16} + 4 x^{17} + 4 x^{18} + 5 x^{19} + 5x^{20} + 5 x^{21} + 6 x^{22} + \cdots,  \\
    \sum_{n=0}^{\infty} D(n\lvert\{1,2,3\}))x^n &=& x^6 + x^7 + 2 x^8 + 3 x^9 + 4 x^{10} + 5 x^{11}  + 7 x^{12}+ 8 x^{13} + \cdots  \text{.}
\end{eqnarray*}
\end{example}

Let $n$ be a positive integer and $A=\{a_1,a_2,\cdots,a_k\}$ be a multiset. We denote the number of partitions of $n$ as $n=a_1n_1+a_2n_2+\ldots+a_kn_k$
which $n_i$'s are odd by $Do(n\lvert A)$.
\begin{theorem}
Let $n$ be a positive integer and $A=\{a_1,a_2,\cdots,a_k\}$ be multiset. Then
\begin{align*}
 Do(2n\lvert A)=\sum_{\substack{0\leq \theta(A) \leq n\\ \theta(A) is even}} D(2n-\theta(A) \lvert A) D(\theta(A)\lvert A),
\end{align*}
where $\theta(A)=\sum_{i=1}^k a_i$.
\end{theorem}
\begin{proof}
Let $n$ be positive integer. we have $2n=a_1n_1+a_2n_2+\ldots +a_kn_k$,
where $n_i =2r_i+1$ are odd. We can write as follow
\begin{align*}
2n&=a_1(2r_1+1)+a_2(2r_2+1)+\ldots + a_k(2r_k+1)\\
&=2r_1 a_1+2r_2 a_2+ \ldots + 2r_k a_k + a_1+a_2+\ldots + a_k.
\end{align*}
Since $2n$ is even; put $a_1+a_2+ \ldots +a_k= \theta(A)$, where $\theta(A)$ is even.
Then the number of natural partitions of $2n$ to odd parts is equal the number of natural partitions of $\theta(A)$ and the number of natural partition of $n-\theta(A)$.
\end{proof}
\section{relatively prime of $D(n\lvert A)$}
\begin{definition}
Let $n$ be positive integer and $A=\{a_1,a_2,\ldots,a_k\}$ be a multiset. We say that $D(n,A)$ is \textit{ relatively prime} if its parts form relatively prime set, that is, if we partition $n$ as $n=a_1n_1+a_2n_2+\ldots +a_kn_k$ then $(n_1,n_2,\cdots,n_k)=1$. We denote the number of such partitions of $n$ with
 $D^{r}(n,A)$.
\end{definition}
\begin{example}
We evaluate relatively prime natural number of $n=11$ with respect to multiset $\{1,1,2\}$, we have
\begin{eqnarray*}
11=1\times\mathbf{1}+1\times\mathbf{2}+2\times\mathbf{4},&&11=1\times\mathbf{1}+1\times\mathbf{4}+2\times\mathbf{3}\\
11=1\times\mathbf{1}+1\times\mathbf{6}+2\times\mathbf{2},&&11=1\times\mathbf{1}+1\times\mathbf{8}+2\times\mathbf{1}\\
11=1\times\mathbf{2}+1\times\mathbf{3}+2\times\mathbf{3},&&11=1\times\mathbf{2}+1\times\mathbf{5}+2\times\mathbf{2}\\
11=1\times\mathbf{2}+1\times\mathbf{7}+2\times\mathbf{1},&&11=1\times\mathbf{3}+1\times\mathbf{4}+2\times\mathbf{2}\\
11=1\times\mathbf{3}+1\times\mathbf{6}+2\times\mathbf{1},&&11=1\times\mathbf{1}+1\times\mathbf{6}+2\times\mathbf{2}\\
11=1\times\mathbf{4}+1\times\mathbf{5}+2\times\mathbf{1}.
\end{eqnarray*}
Then $D^r(11,\{1,1,2\})=10$.
\end{example}
\begin{lemma}\label{LL}
Let $n$ be a positive integer and $A=\{a_1,a_2\}$. If $a_1=a_2=a$ then $D_0(n,\{a_1,a_2\})=\lfloor\frac{n}{2a}\rfloor+1$, and if $a_1\neq a_2$ then $D_0(n,\{a_1,a_2\})=\lfloor\frac{n+a_1+a_2-1}{a_1a_2}\rfloor$.
\end{lemma}
\begin{theorem}\label{MU}
Let $n$ be a non-negative integer. For multiset $A=\{a_1,a_2,\ldots,a_k\}$ we have
\begin{align}
D^r(n,A)=\sum_{ d\lvert n}\mu(d)D\left(\frac{n}{d},A\right),
\end{align}
where $\mu(d)$ is the M\"{o}bius function.
\end{theorem}
\begin{proof}
For non-negative integers $n,k$, we have
$D(n,A)=\sum_{ d\lvert n}D^r\left(\frac{n}{d},A\right)$,
which by the \textit{M\"{o}bius inversion formula} we have that
$D^r(n,A)=\sum_{ d\lvert n}\mu(d)D\left(\frac{n}{d},A\right)$,
as required.
\end{proof}

\begin{corollary}
Let $n$ be non-negative integer and $A=\{a_1,a_2\}$.  If $a_1=a_2=a$ then
\[ D_0^r(n,\{a_1,a_2\})=\frac{1}{2a}\lfloor \varphi(n)\rfloor,\]
where $\varphi(n)$ is the \textit{Eulers totient function}.
\end{corollary}
\begin{proof}
Let $n, k$  be non-negative integers and $p_1^{\alpha_1}\ldots p_k^{\alpha_k}$
be the prime decomposition of $n$. By Lemma~\ref{LL} and Theorem~\ref{MU}, we have
$D^r(n,A)=\sum_{ d\lvert n}\mu(d)\big{(}\lfloor \frac{n}{2ad}\rfloor+1 \big{)}$.
If $2ad\lvert n$ then $\lfloor\frac{n}{2ad}\rfloor$ be integer and recall that $ \sum_{ d\lvert n} \varphi(n)=n$ and $\sum_{ d\lvert n}\mu(d)=\lfloor \frac{1}{n} \rfloor$, by the M\"{o}bius inversion we have
\[\sum_{ d\lvert n}\mu(d)\lfloor\frac{n}{2ad}\rfloor=\frac{1}{2a} \varphi(n).\]
Now, if $2ad \nmid n$ we have
\begin{align*}
\sum_{ d\lvert n}\mu(d)\lfloor \frac{n}{2ad}\rfloor&=\sum_{ d\lvert n}\mu(d)(\frac{n}{2ad}-\frac{1}{2a})=\sum_{ d\lvert n}\mu(d)(\frac{n}{2ad})-\sum_{ d\lvert n} \mu(d)(\frac{1}{2a})\\
&=\frac{1}{2a} \varphi(n) -\frac{1}{2a}\sum_{ d\lvert n}\mu(d)=\frac{1}{2a} \varphi(n),
\end{align*}
as claimed.
\end{proof}

{\bf Acknowledgement}: The third author would like to thanks Eric Towers for discussing some of the results of this paper.

\end{document}